\newtheorem{theorem}{Theorem}[section]
\newtheorem{claim}{Claim}[section]
\newtheorem{lemma}[theorem]{Lemma}
\newtheorem{Proposition}[claim]{Proposition}
\newtheorem{remark}[theorem]{Remark}
\newtheorem{case}{Case}
\author[Jie Zhang, Zhilan Wang, Jin Yan]{Jie Zhang
  \and Zhilan Wang
  \and Jin Yan\thanks{The author's work is supported by NNSF of China (No.12071260).}}
\title{A characterization of rich $c$-partite ($c \geq 8$) tournaments without $(c+2)$-cycles}
\affiliation{School of Mathematics, Shandong University, Jinan, China}
\keywords{Multipartite tournaments; cycles; strong}
\begin{document}

\publicationdata{vol. 25:2 }{2023}{19}{10.46298/dmtcs.9732}{2022-06-23; 2022-06-23; 2022-12-26; 2023-08-02}{2023-09-24}
\maketitle
\begin{abstract}
  Let $c$ be an integer. A $c$-partite tournament is an orientation of a complete $c$-partite graph. A $c$-partite tournament is rich if it is strong and each partite set has at least two vertices. In 1996, Guo and Volkmann characterized the structure of all rich $c$-partite tournaments without $(c+1)$-cycles, which solved a problem by Bondy. They also put forward a problem that what the structure of rich $c$-partite tournaments without $(c+k)$-cycles for some $k \geq 2$ is. In this paper, we answer the question of Guo and Volkmann for $k=2$.
\end{abstract}

\section{Introduction}

In this paper, we consider only finite digraphs without loops or multiple arcs. For a digraph $D$, we denote its vertex set by $V(D)$ and its arc set by $A(D)$. A digraph is \emph{strong} if, for every pair $x$, $y$ of distinct vertices in $D$, there exist a path from $x$ to $y$ and a path from $y$ to $x$. The notation $q$-cycle ($q$-path) means a cycle (path) with $q$ arcs. We will use $(A,B)$-arc to denote an arc from a vertex in $A$ to a vertex in $B$. A \emph{$c$-partite tournament} is an orientation of a complete $c$-partite graph and is \emph{rich} if it is strong and each partite set has at least two vertices. We denote by $\mathcal{D}$ the family of all rich $c$-partite ($c\geq 5$) tournaments. It is clear that \emph{tournaments} are special $c$-partite tournaments on $c$ vertices with exactly one vertex in each partite set.

An increasing interest is to generalize results in tournaments to larger classes of digraphs, such as multipartite tournaments. For results on tournaments and multipartite tournaments, we refer the readers to \cite{Bang-Jensen2,Bang-Jensen,Bang-Jensen3,Beineke,Volkmann}. Many researchers have done a lot of work on the study of cycles whose length do not exceed the number of partite sets. In 1976, \cite{bondy} proved that every strong $c$-partite ($c\geq 3$) tournament contains a $k$-cycle for all $k \in \{3, 4, \ldots, c\}$. He also showed that every $c$-partite tournament in $\mathcal{D}$ contains a $q$-cycle for some $q>c$, and asked the following question: does every multipartite tournament of $\mathcal{D}$ contains a $(c+1)$-cycle? A negative answer to this question was obtained by \cite{gutin1}. The same counterexample was found independently by \cite{Balakrishnan}. Further in \cite{gutin2}, the following result was proved.

\begin{theorem}\cite{gutin2} \label{Gutin} Every multipartite tournament in $\mathcal{D}$ has a $(c+1)$-cycle or a $(c+2)$-cycle. \end{theorem}

In \cite{guo}, $\mathcal{W}_m$ is defined as follows. Let $c\ (\geq 5)$ be an integer and $P=x_1\cdots x_m$ be a path with $m \geq c$. The $c$-partite tournament consisting of the vertex set $\{x_1,\ldots,x_m\}$ and the arc set $A(P)\cup \{x_ix_j: i-j>1\ \textup{and}\ i \not\equiv j (\textup{mod}\ c)\ \textup{where}\ i,j \in [m]\}$ is denoted by $W_m$. The set of all $c$-partite tournaments obtained from $W_m$ by replacing $x_i$ by a vertex set $A_i$ with $|A_i| \geq 2$ for $i\in \{1,2,m-1,m\}$ is denoted by $\mathcal{W}_m$.

In 1996, \cite{guo} gave a complete solution of this problem of Bondy and determined the structure of all $c$-partite ($c \geq 5$) tournaments of $\mathcal{D}$, that have no $(c+1)$-cycle.

\begin{theorem}\cite{guo}\label{guo2} Let $D$ be a c-partite tournament in $\mathcal{D}$. Then $D$ has no $(c+1)$-cycle if and only if $D$ is isomorphic to a member of $\mathcal{W}_m$. \end{theorem}


In this paper, we characterize all $c$-partite ($c\geq 8$) tournaments in $\mathcal{D}$ without $(c+2)$-cycles. Before defining families $\mathcal{Q}_m$ and $\mathcal{H}$, we present the main theorem.

\begin{theorem} \label{main} Let $D$ be a c-partite ($c\geq 8$) tournament in $\mathcal{D}$. Then $D$ has no $(c+2)$-cycle if and only if $D$ is isomorphic to a member of $\mathcal{Q}_m$ or $\mathcal{H}$. \end{theorem}


The families $\mathcal{Q}_m$ and $\mathcal{H}$ are described as follows.

$\bullet$ Let $i$ be a given integer with $2<i< c-1$. Define $\mathcal{H}^\prime$ the set of $(c+1)$-partite tournaments whose partite sets are $V_1,\ldots,V_{c+1}$, where $V_1=\{v_1\}$, $|V_i| \geq 1$ and $|V_j| \geq 2$ for $j \in [c+1]\setminus \{1,i\}$, and the arc set consists of arcs from each vertex of $V_{j_1} $ to each vertex of $ V_{j_2}$, where $2 \leq j_1 < j_2 \leq c+1$, and arcs between $v_1$ and vertices in other partite sets with arbitrary directions.
The family of all $c$-partite tournaments obtained from a member of $\mathcal{H}^\prime$ by deleting all arcs between $v_1$ and $V_i$ and merging $V_1$ and $V_i$ into a partite set is denoted by $\mathcal{H}$. 
\bigskip

\begin{figure}[h]
  \begin{center}
  \includegraphics[width=0.4\textwidth]{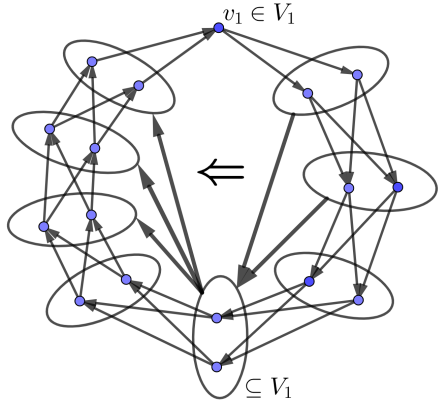}
  \caption{An example of the $8$-partite tournament with $|V_1|=3$ and $|V_j|=2$ for $2 \leq j \leq 8$. Here, the arcs between $v_1$ and other vertices are arbitrary.}
  \label{fig1}
  \end{center}
\end{figure}

\medskip

$\bullet$ Let $s$ and $t$ be two fixed integers with $1 \leq s <t-1 \leq c$ and $P=x_1\cdots x_m$ be a path with $m \geq c$. We denote $Q_m^\prime$ the $(c+1)$-partite tournament consisting of the vertex set $\{x_1,\ldots,x_m\}$ and the arc set $A(P)\cup \{x_ix_j: i-j>1\ \textup{and}\ i \not\equiv j (\textup{mod}\ c)\ \textup{where}\ i,j \in [m]\}$. Deleting arcs of $Q_m^\prime$ between $\{x_i|\ i\equiv s\ (\textup{mod}\ (c+1)) \}$ and $\{x_j|\ j\equiv t\ (\textup{mod}\ (c+1)) \}$ and  and merging $V_i$ and $V_j$ into a partite set, we obtain a $c$-partite tournament $Q_m^1$.

Let $\mathcal{Q}_m=\mathcal{Q}_m^1 \cup \mathcal{Q}_m^2 $, where $\mathcal{Q}_m^1$ and $ \mathcal{Q}_m^2 $ are defined as follows.
\begin{enumerate}[(\begin{math}\mathcal{Q}_m^1\end{math})]
\item The set of all $c$-partite tournaments obtained from $Q_m^1$ by substituting $x_i$ with a vertex set $A_i$ is denoted by $\mathcal{Q}_m^1$ for
    \begin{enumerate}[(1)]
      \item $i \in \{1,2,m-1,m\};$ or
      \item $i=t$ when $s=1$ and $t=3$ or $4$; or
      \item $i=m-2$ when $\{m,m-2\}\equiv\{s,t\}$ (mod $(c+1)$), or $i=m-3$ when $\{m,m-3\}\equiv\{s,t\}$ (mod $(c+1)$).
    \end{enumerate}

\item $\mathcal{Q}_m^2$ is the set of all $c$-partite tournaments obtained from a member of $\mathcal{Q}_m^1$ by reversing some arcs satisfying
\begin{align}
\begin{split}
\left\{
  \begin{array}{ll}
    (A_2, A_3)\hbox{-arcs},& \hbox{when $t=3,s=1$;}\\
    (A_1, A_2)\hbox{-arcs}, & \hbox{when $t=c+1,s=2$;} \\
    (A_{m-2}, A_{m-1})\hbox{-arcs}, & \hbox{when $\{m-2, m\} \equiv \{s,t\} \ (\textup{mod}\ (c+1)) $;} \\
    (A_{m-1}, A_m)\hbox{-arcs},  & \hbox{when $\{m-1, m-c\} \equiv \{s,t\} \ (\textup{mod}\ (c+1)) $.}  \nonumber
  \end{array}
\right.
\end{split}
\end{align}
%
\end{enumerate}

Note that, in our main theorem, the parameter $c$ is at least 8. This condition may be not sharp. The characterization of rich $c$-partite tournaments with $5 \leq c \leq 7$ needs more techniques. We conclude this section by giving the following organization. In the second section, we set up notation and some helpful lemmas. The proof of the main theorem is presented in the third section.

\bigskip
\smallskip

\begin{figure}[h]
  \begin{center}
  \includegraphics[width=0.8\textwidth]{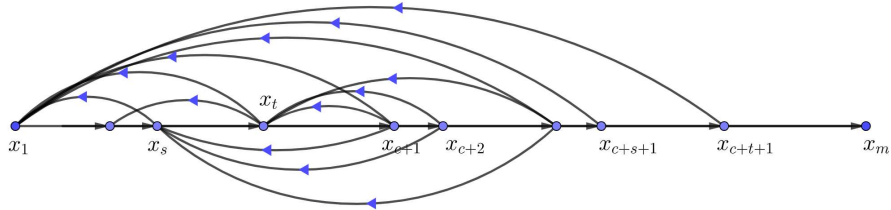}
  \caption{An example of $Q_m^1$. Here, all other possible arcs are of the same direction as the path. }
  \label{fig2}
  \end{center}
\end{figure}

\section{Notation and useful lemmas}

\medskip
\begin{flushleft}
{\large{\emph{2.1 Notation  }}}
\end{flushleft}

For terminology and notation not defined here, we refer to \cite{Bang-Jensen}. Let $D$ be a digraph. For the vertex $x \in V(D)$, the set of out-neighbours of $x$ is denoted by $N_D^+(x)=\{y \in V(D) |\ xy \in A(D)\}$ and the set of in-neighbours of $x$ is denoted by $N_D^-(x)=\{y \in V(D) |\ y x \in A(D)\}$, respectively. For a vertex set $X \subseteq V (D)$, we define $N^+(X)= N_D^+(X) = \cup_{x\in X} N_D^+(x)\setminus X$ and $N^-(X) = N_D^-(X) = \cup_{x\in X} N_D^-(x)\setminus X$. When $X$ is a subdigraph of $D$, we write $N^+(X)$ instead of $N^+(V(X))$. We define $D[X]$ as the subdigraph of $D$ induced by $X$, and let $D - X=D[V(D)\setminus X]$. Define $[t] = \{1,\ldots, t\}$ for simplicity.

Let $C$ be a cycle (or path). For a vertex $v$ of $C$, the successor and the predecessor of $v$ on $C$ are denoted by $v^+$ and $v^-$, respectively. We write the $i$-th successor and the $i$-th predecessor of $v$ on $C$ as $v^{i+}$ and $v^{i-}$, respectively. The notation $v_i C v_j$ means the subpath of $C$ from $v_i$ to $v_j$ along the orientation of $C$. The length of $C$ is the number of arcs of $C$. We say a vertex $x$ outside $C$ can be \emph{inserted} into $C$ if there is an in-neighbour of $x$ on $C$, say $v$, such that $v^+$ is an out-neighbour of $x$. 

If $xy$ is an arc in $A(D)$, then we write $x \rightarrow y$ and say that $x$ dominates $y$. If $X$ and $Y$ are two disjoint vertex sets of $D$, we use $X \rightarrow Y$ to denote that every vertex of $X$ dominates every vertex of $Y$, and define $A \Rightarrow B$ that there is no arc from a vertex in $B$ to a vertex in $A$. If $X$ or $Y$ consists of a single vertex, we omit the braces in all following notation. Correspondingly, $x\nrightarrow y$ expresses that $xy \notin A(D)$.

A path $P=x\cdots y$ is \emph{minimal} if no proper subset of $V(P)$ induces a subdigraph of $D$ which contains a path from $x$ to $y$.
For two vertices $x$ and $y$ in $D$, the distance from $x$ to $y$ in $D$, denoted by $dist(x, y)$, is the length of a shortest path from $x$ to $y$ in $D$. The \emph{diameter} of $D$, denoted by $diam(D)$, is the maximum distance between all pairs of its vertices.

\begin{flushleft}
{\large{\emph{2.2 Useful lemmas.  }}}
\end{flushleft}

We give the following results that are frequently used in the proof of Theorem \ref{main}.
\begin{theorem}\cite{guo}\label{guo1} Let $D$ be a strong $c$-partite tournament. If $D$ has a $k$-cycle containing vertices from exactly $l$ partite sets with $l<c$, then $D$ has a $t$-cycle for all $t$ satisfying $k \leq t \leq c+(k-l)$.  \end{theorem}

\begin{remark} \label{fact1} Let $C$ be a $k$-cycle in a digraph $D$. If $D$ contains no $(k+1)$-cycle, then no vertex can be inserted into $C$. \end{remark}


\begin{lemma} \label{lem+-} Let $D$ be a multipartite tournament in $\mathcal{D}$ and $C$ a $(c+1)$-cycle of $D$. Suppose that $D$ has no $(c+2)$-cycle and $D-C \subseteq N^+(C) \cap N^-(C)$. Then for every $y \in D-C$, there exists a vertex $x \in C$ such that $x$ and $y$ belong to the same partite set of $D$ and have the same in-neighbours and out-neighbours in $C$.\end{lemma}

\begin{proof}
Let $C=x_1x_2\cdots x_iy_1x_{i+1}\cdots x_cx_1$ a $(c+1)$-cycle of $D$, where $x_j \in V_j$ for $j \in [c]$ and $y_1 \in V_1$. Clearly, there exists a vertex $x \in C$ such that $x$ and $y$ belong to the same partite set of $D$. Assume that $x= x_j$, as the case $x=x_1$ and the case $x=y_1$ are similar.

Suppose that $j=1$. First suppose that $yx_{1}^-\in A(D)$. Then $y \Rightarrow x_{i+1}Cx_c$. Obviously, if $y \Rightarrow x_{i}$ then $y \Rightarrow x_{2}Cx_i$. Since $y \in N^+(C) \cap N^-(C)$, we have $x_{i} \Rightarrow y$. Thus $x_{i} \Rightarrow y \Rightarrow x_{i+1}$ and $y$ and $y_1$ have the same in-neighbours and out-neighbours in $C$. Second suppose that $x_{1}^+y\in A(D)$. Similarly, we obtain that $y$ and $y_1$ have the same in-neighbours and out-neighbours in $C$ again. Hence $x_{1}^-y, yx_{1}^+ \in A(D)$ and $y$ and $x_1$ have the same in-neighbours and out-neighbours in $C$.

Set $j\in [c]\setminus \{1\}$. If $yx_{j}^-\in A(D)$, then $y \Rightarrow C$ because $D$ has no $(c+2)$-cycle, which contradicts the assumption. Thus $x_{j}^-y\in A(D)$ for $j \in \{2,\ldots,c\}$. Similarly, we obtain that $y \rightarrow x_j^+$ for $j \in \{2,\ldots,c\}$. Thus $y$ and $x_j$ have the same in-neighbours and out-neighbours in $C$.
\end{proof}

\begin{lemma} \label{clm1} Let $D$ be a $c$-partite tournament in $\mathcal{D}$ and $\mathcal{C}$ the family of all $(c+1)$-cycles of $D$. Suppose that $D$ has no $(c+2)$-cycle. If $D-C \subseteq N^+(C) \cap N^-(C)$ for every $C \in \mathcal{C}$, then $D \in \mathcal{H}$. \end{lemma}

\begin{proof}
Since $D$ has no $(c+2)$-cycle, it follows by Theorem \ref{guo1} that each $(c+1)$-cycle of $D$ meets all partite sets of $D$. This implies that each $(c+1)$-cycle contains exactly two vertices from one partite set and one vertex from other each partite set. Let $C \in \mathcal{C}$ and assume that it contains two vertices of $V_1$, that is, $C=x_1x_2\cdots x_iy_1x_{i+1}\cdots x_cx_1$, where $x_j \in V_j$ for $j \in [c]$ and $y_1 \in V_1$.

Since every partite set of $D$ has at least two vertices, there exist $c-1$ vertices $y_2,\ldots,y_c$ such that $y_j \in V_j $ for $j\in \{2,\ldots,c\}$. By Lemma \ref{lem+-}, we have $x_j^- \rightarrow y_j \rightarrow x_j^+$ for $j \in [c]\setminus \{1\}$. Note that $x_i$ and $y_i$ have the same in-neighbours and out-neighbours in $C$. Since $y_i$ is any vertex that is distinct with $x_i$ in $V_i$, each vertex in $V_i$ has the same in- or out-neighbors with $x_i$ for $i=2,\ldots ,c$. In the following, we often use this property to determine the direction of the arcs in $A(D)$. We get $x_1\rightarrow V_2 \rightarrow \cdots \rightarrow V_i \rightarrow y_1 \rightarrow V_{i+1} \rightarrow \cdots \rightarrow V_c \rightarrow x_1$. Let $C^\prime$ be the $(c+1)$-cycle $x_1y_2\cdots y_iy_1y_{i+1}\cdots y_cx_1$.

\begin{claim} \label{lem} The following statements hold.
\begin{align}
& (1)\ \{V_2,\ldots,V_{j-1}\} \rightarrow V_j \rightarrow \{V_{j+1},\ldots,V_{i}\}\ \text{ for }\ 2 \leq j \leq i-1;\nonumber \\
& (2)\ \{V_{i+1},\ldots,V_{j-1}\} \rightarrow V_j \rightarrow \{V_{j+1}, \ldots, V_c\} \ \text{ for }\ i+1 \leq j \leq c-1.\nonumber
\end{align}
\end{claim}
\begin{proof}
Suppose that there exists an integer $t \in \{2,\ldots,c\}\backslash \{2,i,i+1,c\}$ such that $y_{t+1}\rightarrow y_{t-1}$. If $x_t^{2+}\rightarrow y_t$, then there is a 6-cycle $x_t^{2+}y_ty_{t+1}y_{t-1}x_tx_{t}^+x_{t}^{2+}$ containing vertices from exactly four partite sets. If $x_t\rightarrow x_{t}^{2-}$, then $x_{t}x_{t}^{2-}x_{t}^-y_ty_{t}^+y_{t}^-x_t$ is a 6-cycle containing vertices from exactly four partite sets. In both cases, we deduce from Theorem \ref{guo1} that $D$ contains a $(c+2)$-cycle, a contradiction. This implies that $y_t \rightarrow x_t^{2+}$ and $x_t^{2-} \rightarrow x_t$. Then $y_tx_{t}^{2+}Cx_{t}^{2-}x_ty_{t+1}y_{t-1}y_t$ is a $(c+2)$-cycle, a contradiction. Thus we obtain that $y_{t-1}\rightarrow y_{t+1}$ for $t \in \{2,\ldots,c\}\backslash \{2,i,i+1,c\}$. By symmetry, we have $V_{t-1} \rightarrow V_{t+1}$. Since $c \geq 8$, it is easy to obtain that $x_j^{3-} \rightarrow y_j$ for $5 \leq j \leq i-1$ and $i+4 \leq j \leq c$ and $y_j \rightarrow x_j^{3+}$ for $ 2 \leq j\leq i-3$ and $i+1 \leq j \leq c-3$. We continue in this fashion to obtain $\{x_{j-1},\ldots,x_2\} \rightarrow y_j \rightarrow \{x_{j+1},\ldots,x_i\}$ for $j\in \{2,\ldots,i-1\}$ and $\{x_{j-1},\ldots,x_{i+1}\} \rightarrow y_j \rightarrow \{x_{j+1}, \ldots, x_c\}$ for $j \in \{i+1,\ldots,c-1\}$, successively. This proves Claim \ref{lem}. \end{proof}

By Claim \ref{lem}, we can obtain a $(c+2)$-cycle from a cycle with larger length. Now consider the arcs between $V_i$ and $V_{i+1}$. If $x_ix_{i+1}, x_{i+1}y_i \in A(D)$, then $D$ has a $(c+3)$-cycle $x_ix_{i+1}y_iy_1y_{i+1}x_{i+2}Cx_i$. If $x_{i+1}x_i, x_iy_{i+1} \in A(D)$, then there is a $(c+3)$-cycle $x_{i+1}x_iy_{i+1}C^\prime y_1x_{i+1}$. By $c \geq 8$ and Claim \ref{lem}, we can obtain a $(c+2)$-cycle from such a $(c+3)$-cycle. Thus $V_i \rightarrow V_{i+1}$ or $V_{i+1} \rightarrow V_i$.

Suppose that $V_i \rightarrow V_{i+1}$. We show that $\{V_2,\ldots,V_{i-1}\} \rightarrow \{V_i,\ldots,V_c\}$. If $x_{i+2}x_i \in A(D)$, then $x_{i+2}x_ix_{i+1}y_{i+2}C^\prime y_{i+1}x_{i+2}$ is a $(c+4)$-cycle. We can obtain a $(c+2)$-cycle because of $c \geq 8$ and Claim \ref{lem}. Based on this, considering arcs between $x_{i+3},\ldots,x_c$ and $x_{i}$ in order, we get $x_i \rightarrow \{x_{i+3},\ldots,x_c\} $. Similarly, it is immediate that $\{x_2,\ldots,x_{i-1}\} \rightarrow \{x_{i+1},\ldots,x_c\}$. Thus $\{V_2,\ldots,V_i\} \rightarrow \{V_{i+1},\ldots,V_c\}$.

For $V_{i+1} \rightarrow V_i$, we will get $ \{V_{i+1},\ldots,V_c\} \rightarrow \{V_2,\ldots, V_i\} $ in the same way. Note that the structures obtained in two cases are isomorphic, so we only consider the first structure in the following.

We declare that
\begin{equation}\label{yy}
\{V_2,\ldots,V_i \} \rightarrow y_1\rightarrow \{V_{i+1},\ldots,V_c\}.
\end{equation}
If $y_1 x_j \in A(D)$ for some $j \in [i-1]$, then $D$ contains a $(c+2)$-cycle $x_1C^\prime y_1x_jx_{i+1}C x_1$, a contradiction. If $x_j y_1 \in A(D)$ for some $j \in \{i+2,\ldots,c\}$, then $(c+2)$-cycle $x_jy_1C^\prime x_1C x_ix_j$ is in $D$. Thus (\ref{yy}) holds.

If $|V_1|=2$, $D$ is a member of $\mathcal{H}$, which proves this lemma. Thus assume that $|V_1| \geq 3$. We show that every vertex in $V_1\setminus \{x_1,y_1\}$ have the same in-neighbours and out-neighbours in $C$ as $y_1$. To see this, let $z_1$ be a vertex in $V_1\setminus \{x_1,y_1\}$. Suppose that $z_1\rightarrow x_i$. It is easy to see $z_1 \rightarrow x_2, \ldots,x_{i-1}$. If $x_j \rightarrow z_1$ for some $j \in \{i+1, \ldots, c\}$, then $x_t \rightarrow z_1$ for all $t \in \{j+1, \ldots, c\}$. Recall that $V(D-C) \subseteq N^+(C) \cap N^-(C)$, we have $x_c \rightarrow z_1$. Observe that there is a 6-cycle $z_1x_2y_cx_1y_2x_cz_1$ which meets 3 partite sets of $D$, a contradiction by Theorem \ref{guo1} again. Thus $x_i \rightarrow z_1$.
Obviously, $\{x_2,\ldots,x_{i-1}\} \rightarrow z_1$. Otherwise there exists a $(j+4)$-cycle $x_iz_1x_jx_cx_1C^\prime y_jx_i$ which meets $j+2$ partite sets of $D$ for $j \in\{2,\ldots,i-1\}$, a contradiction. On the other hand, it is easy to see $z_1 \rightarrow \{x_{i+1}\ldots,x_j\}$ if $z_1x_{j+1} \in A(D)$ for some $j \in \{i+1,\ldots,c\}$. Since $z_1 \in N^+(C) \cap N^-(C)$, we have $z_1x_{i+1} \in A(D)$. Thus $x_1x_2\cdots x_iz_1x_{i+1}\cdots x_cx_1$ is also a $(c+1)$-cycle. This implies that $z_1$ and $y_1$ have the same in-neighbours and out-neighbours in $C$. Hence $D$ is a member of $\mathcal{H}$. We are done.
\end{proof}

\section{Proof of Theorem \ref{main}}
Now we are ready to prove our main theorem. It is easy to see that every element of $\mathcal{H}$ and $\mathcal{Q}_m$ has no $(c+2)$-cycle. Hence, it suffices to show the converse is true as well. 

Suppose that $D$ is a $c$-partite tournament in $\mathcal{D}$ such that $D$ has no $(c+2)$-cycle and is not isomorphic to any element of $\mathcal{H}$ and $\mathcal{Q}_m$. Let $V_1,\ldots ,V_c$ be partite sets of $D$. By Theorem \ref{Gutin}, we know that $D$ contains a $(c+1)$-cycle. It follows by Theorem \ref{guo1} that each $(c+1)$-cycle of $D$ visits exactly one partite set twice and each other partite sets once. Let $\mathcal{C}$ be the set of all $(c+1)$-cycles of $D$.
Lemma \ref{clm1} gives that for every $C \in \mathcal{C}$, if all vertices of $D-C$ are contained in $N^+(C) \cap N^-(C)$, then $D \in \mathcal{H}$. Thus there exists at least one cycle $C$ in $\mathcal{C}$ such that $D-C$ contains a vertex outside $N^+(C) \cap N^-(C)$. Denote $C=x_1x_2\cdots x_{c+1}x_1$, where $x_j \in V_j$ for $j \in [i-1]$, $x_{j} \in V_{j-1}$ for $j \in \{i+1,\ldots,c+1\}$ and $x_i \in V_1$. Without loss of generality, assume that there exists a vertex $z \notin N^-(C)$. Because $D$ is strong, there is a path from $z$ to $C$. Let $P=z_1z_2 \cdots z_p$ be such a minimal path with $z_1 = z$ and assume that $z_p=x_t$. It is clear that, $p \geq 3$ and $z_2,\ldots,z_{p-2} \notin N^-(C)$, particularly, $z_{p-2} \nrightarrow x_{t-2}$ and $x_{t-1} \rightarrow z_{p-2}$. Since $D$ has no $(c+2)$-cycle, we see that $x_{t-2}z_{p-2} \notin A(D)$. This implies that there is no arc between $z_{p-2}$ and $x_{t-2}$, that is $x_{t-2}$ and $z_{p-2}$ must belong to the same partite set of $D$. It is not hard to get $z_{p-1}\nrightarrow  x_{t-1}$. Since, otherwise, $x_{t-3}$ and $z_{p-2}$ must belong to the same partite set of $D$, which is impossible. Together with $x_{t-1}\nrightarrow z_{p-1}$ we obtain that $x_{t-1}$ and $z_{p-1}$ belong to the same partite set of $D$. Further, vertices $x_{t-i}$ and $z_{p-i}$ belong to the same partite set for $1 \leq i \leq p-1$. It is obvious that $x_{t-2}\rightarrow z_{p-1}$ due to $V(C)\setminus x_t \Rightarrow z_{p-1}$.

We may assume that $x_t$ is on the path $x_{i+1}Cx_1$. If $C$ has a path from $x_t$ to $x_{t-2}$ with at most $c-1$ vertices, then together with the path $x_{t-2}x_{t-1}z_{p-2}z_{p-1}x_t$, we can form the path into a cycle of length at most $c+2$, which contains the vertices $x_{t-1}$ and $z_{p-1}$, and $x_{t-2}$, $z_{p-2}$ in the same partite sets respectively. We deduce that $D$ has a $(c+2)$-cycle from Theorem \ref{guo1}, a contradiction. This gives $x_{j+2}C x_{t-2} \Rightarrow x_j \Rightarrow x_tCx_{j-2}$ for $x_j \in x_tC x_{t-2}$. For the same reason, $x_{t-1} \Rightarrow x_iCx_{t-3}$ when $t > i+2$; and $ x_{t-1} \Rightarrow x_1Cx_{t-3}$ when $t = i+1$ or $i+2$.

\begin{claim} \label{clmc+2} $diam(D) \geq c+2$. \end{claim}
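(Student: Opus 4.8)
\textbf{Proof proposal for Claim \ref{clmc+2}.}

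The plan is to exhibit a pair of vertices of $D$ at distance at least $c+2$. The natural candidates are $z_1$ (which lies outside $N^-(C)$, so every arc between $z_1$ and $C$ that exists points \emph{into} $z_1$) and some vertex on $C$, chosen so that any short path between them is obstructed. More precisely, I would first observe that since $P=z_1\cdots z_p$ is a minimal path from $z_1$ to $C$ with $z_p=x_t$, and since $x_{t-i}$ and $z_{p-i}$ lie in the same partite set for $1\le i\le p-1$, the vertex $z_1$ has no out-neighbour among $V(C)$ except possibly on the short arc near $x_t$; combining this with $V(C)\setminus x_t\Rightarrow z_{p-1}$ and the domination relations $x_{j+2}Cx_{t-2}\Rightarrow x_j\Rightarrow x_tCx_{j-1}$ already established, one sees that to leave $z_1$ and reach a vertex like $x_{t-2}$ (which is in the same partite set as $z_{p-2}$, hence is not dominated by $z_{p-2}$) one is forced to travel a long way around.

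The key steps, in order: (i) Show that $N^+(z_1)\cap V(C)$ is empty, or is contained in a single arc immediately following $x_t$ on $C$ — this uses minimality of $P$ together with the partite-set coincidences and the no-$(c+2)$-cycle hypothesis in the same spirit as the paragraph preceding the claim. (ii) Pick the target vertex $w$ on $C$ to be, say, $x_{t-2}$ or its partner $z_{p-2}$, or a vertex just ``before'' the segment where $z_1$'s out-neighbours could lie; then argue that any path from $z_1$ to $w$ must first enter $C$ at $x_t$ (or at a vertex very close to $x_t$), and then, because of the relations $x_{j+2}Cx_{t-2}\Rightarrow x_j\Rightarrow x_tCx_{j-1}$, must traverse essentially the whole cycle $C$ to get from that entry point to $w$. (iii) Count: a path of the form (entry at $x_t$) followed by (almost a full lap of the $(c+1)$-cycle $C$) has length at least $c+2$ — if a shorter route existed it would, by Theorem \ref{guo1}, produce a $(c+2)$-cycle of $D$ (close up the path through the detour $x_{t-2}x_{t-1}z_{p-2}z_{p-1}x_t$, exactly as in the argument just before the claim), contradicting our standing hypothesis. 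Hence $dist(z_1,w)\ge c+2$ and $diam(D)\ge c+2$.

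The main obstacle I expect is step (ii): pinning down \emph{which} vertex $w$ is genuinely far from $z_1$, and ruling out \emph{all} short detours rather than just the obvious ones. One has to make sure no short path sneaks from $z_1$ onto $C$, runs a little way, jumps off onto $D-C$, and re-enters $C$ closer to $w$ — i.e.\ the argument has to handle paths that alternate between $C$ and its exterior. The ``$\Rightarrow$'' (no backward arc) relations on $C$ near $x_t$ are precisely what kills the on-cycle shortcuts, and the fact that $z_1\notin N^-(C)$ together with the partite coincidences along $P$ restricts where excursions off $C$ can rejoin; the bookkeeping to combine these into a clean lower bound of $c+2$ on the distance is the delicate part. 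Everything else — producing the short closing cycle and invoking Theorem \ref{guo1} to derive the contradiction — is routine given the machinery already set up in this section.
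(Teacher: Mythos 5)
Your high-level strategy for the ``generic'' case matches the paper's: use $z\notin N^-(C)$ together with the $\Rightarrow$ relations on $C$ established just before the claim to force any path from $z$ to a suitably chosen target vertex to be long. The paper's target is $x_{t-1}$, and the extra ingredient you are missing in step (ii) is an actual domination statement, not just a ``no backward arc'' statement: the paper first proves that $x_{t-1}$ dominates \emph{every} vertex of $x_{t+1}Cx_{i-1}$ when $t\geq i+2$ (otherwise $x_jx_{t-1}x_{j+1}Cx_{t-2}z_{p-1}x_tCx_j$ is a $(c+2)$-cycle), which upgrades $D[C]$ to a copy of $Q_{c+1}$ with source $x_t$ and sink $x_{t-1}$ and thereby kills all shortcuts, including the on/off-cycle detours you worry about. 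Your proposal leaves exactly this step as an acknowledged ``obstacle,'' so as written it is a plan rather than a proof.

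More seriously, your argument cannot be made to work uniformly: the case where every such minimal path ends at $x_t=x_{i+1}$ (i.e.\ immediately after the vertex $x_i$ lying in the doubled partite set $V_1$) genuinely breaks the lap-counting argument, and this case occupies most of the paper's proof. There the paper splits on the position $i$ of the repeated partite set: for $i\geq 5$ (or $i=4$ with $x_i\rightarrow x_{c+1}$) it forces $x_i\rightarrow x_j$ for $i+3\leq j\leq c+1$ via a cycle visiting $V_1$ three times; and for $i\in\{3,4\}$ it must introduce an auxiliary vertex $y\in V_c\setminus\{x_{c+1}\}$, consider whether $y$ lies in $N^+(C)\setminus N^-(C)$, $N^-(C)\setminus N^+(C)$, or $N^+(C)\cap N^-(C)$, and in the last subcase invoke Lemma \ref{lem+-} (and in another, pass to the reversed digraph). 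None of these ideas appears in your sketch, and without them the claim is not established when $t=i+1$ with $i$ small. So the proposal has a genuine gap: the key lemma for the main case is asserted but not proved, and an essential exceptional case is not recognized at all.
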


\begin{proof}
Since $x_{t-2}z_{p-1} \in A(D)$, it is not hard to obtain that $x_{t-1}$ dominates each vertex of $x_{t+1}Cx_{i-1}$ when $t \geq i+2$. Otherwise, there exists a vertex $x_j$ in $x_{t+1}Cx_{i-1}$ with $x_{t-1}\rightarrow x_{j+1}Cx_i$ and $x_j \rightarrow x_{t-1}$. Observe that $x_jx_{t-1}x_{j+1}Cx_{t-2}z_{p-1}x_tCx_j$ is a $(c+2)$-cycle, a contradiction. Therefore, $D[C]$ is isomorphic to $\mathcal{Q}_{c+1}$ with the initial vertex $x_t$ and the terminal vertex $x_{t-1}$. This implies that every minimal path from $z$ to $C$ must end at $x_t$ and $dist(z,x_{t-1}) \geq c+2$. Thus $diam(D) \geq c+2$ when $t \geq i+2$. If there is a vertex $x \notin V(C)\cup N^-(C)$ such that the minimal path from $x$ to $C$ which ends at the path $x_{i+2}Cx_1$, we complete the proof. Then all such minimal paths from $x$ to $C$ end at $x_{i+1}$, that is $x_t=x_{i+1}$. The following proof is divided into two cases.

\begin{case} $i \geq 5$; or $i = 4$ and $x_i \rightarrow x_{c+1}$. \end{case}

If $x_j \rightarrow x_i$ for $i+3\leq j \leq c$ and $i \geq 4$, or $x_{c+1} \rightarrow x_i$ when $i \geq 5$, observe that $x_jx_ix_2x_3x_1z_{p-2}$ $z_{p-1}x_{i+1}Cx_j$ is a cycle of length at most $c+2$ which visits $V_1$ three times, a contradiction. Thus in this case we have $x_i \rightarrow x_j$ for $i+3\leq j \leq c+1$. Hence $dist(z,x_{t-1}) \geq c+2$, which implies that $diam(D) \geq c+2$.

\begin{case} $i = 4$ and $x_{c+1} \rightarrow x_i$; or $i=3$. \end{case}

Recall that every partite set of $D$ has at least two vertices. Hence there is at least one vertex $y$ in $V_c \backslash \{x_{c+1}\}$. If $y \in N^+(C) \backslash N^-(C)$, then we choose $y$ as the vertex $z$, that is $y=z$. It is easy to check that $dist(y,x_1) \geq c+2$. If $y \in N^-(C)\backslash N^+(C)$, by considering the digraph $D^\prime $ obtained by reversing all arcs of $D$, we get $diam(D^\prime) \geq c+2$, that is $diam(D) \geq c+2$. If $y \in N^+(C)\cap N^-(C)$, then $x_c \rightarrow y \rightarrow x_1$ by Lemma \ref{lem+-}. This implies that $D$ contains a $(c+2)$-cycle $x_c y x_1 Cx_{i-1}x_{c+1}z_{p-1}x_{i+1}Cx_c$, a contradiction. Thus $diam(D) \geq c+2$ when $i \in \{3, 4\}$. This completes the proof of the claim.\end{proof}

Let $P=x_1x_2\cdots x_m$ be a path of $D$ with $dist(x_1,x_m)=diam(D)=m-1 \geq c+2$. As $D$ contains no $(c+2)$-cycle, vertices $x_i$ and $x_{c+i+1}$ must belong to the same partite set. If there exists vertex set $\{x_{i_1},x_{j_1},x_{i_2},x_{j_2}\} \subset V(D)$ with max$\{i_1,j_1,i_2,j_2\}-\mbox{min}\{i_1,j_1,i_2,j_2\} \leq c$ such that $x_{i_1}$ and $x_{j_1}$ belong to the same partite set and $x_{i_2}$ and $x_{j_2}$ belong to the same another partite set, then $D$ contains a $(c+2)$-cycle by applying Theorem \ref{guo1}, a contradiction. Thus $x_1Px_{c+1}$ meets all partite sets of $D$ and contains two vertices of exactly one partite set. Therefore, $D[P]$ is isomorphic to $Q_{m}$ with the initial vertex $x_1$ and the terminal vertex $x_m$. If $|V(D)|=m$, we are done. So $|V(D)|> m$. Assume that $x_j \in V_j$ for $j \in [i-1]$, $x_{j} \in V_{j-1}$ for $j \in \{i+1,\ldots,c\}$ and $x_i \in V_t$ for some $t \in [i-1]$. Let $x$ be a vertex of $D-P$. Suppose that $x \in N^+(P) \cap N^-(P)$, we now consider the arcs between $x$ and $V(P)$. We use $V_m$ to indicate the partite set which $x_m$ belongs to.

%

\begin{claim} \label{clm2} Suppose that $x \in N^+(P) \cap N^-(P)$. If there exist two vertices $x_p$ and $x_q$ on $P$ with $p < q$ such that $x_p \rightarrow x \rightarrow x_q$, then $x$ belongs to one of $\{V_1,V_2,V_{m-1},V_m\}$. Moreover, $x$ has the same in-neighbours and out-neighbours on $P$ as $x_l \in \{x_2,x_3,x_4,x_{m-3},$ $x_{m-2},x_{m-1}\}$, where
\begin{align}
\begin{split}
\left\{
  \begin{array}{ll}
    x_3 \in V_1, & \hbox{when $l=3$;} \\
    x_4 \in V_1, & \hbox{when $l=4$;} \\
    x_{m-2} \in V_m, & \hbox{when $l=m-2$;} \\
    x_{m-3} \in V_m, & \hbox{when $l=m-3$.} \nonumber
  \end{array}
\right.
\end{split}
\end{align}
\end{claim}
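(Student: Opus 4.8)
The plan is to pin the indices $p,q$ down very tightly and then extract everything by inserting $x$ into suitable $(c+1)$-cycles of $D[P]$, using that $D$ has no $(c+2)$-cycle together with Remark~\ref{fact1}. To begin, since $x_p\to x\to x_q$ with $p<q$, the sequence $x_1x_2\cdots x_p\,x\,x_qx_{q+1}\cdots x_m$ is a path of $D$ from $x_1$ to $x_m$ (its vertices are pairwise distinct because $x\notin P$ and $p<q$) of length $m+1-(q-p)$; as $dist(x_1,x_m)=diam(D)=m-1$ this forces $q\in\{p+1,p+2\}$. To kill $q=p+1$: since $D[P]\cong Q_m$ and the partite sets along $P$ repeat with period $c+1$, for a suitable $k\in[\max\{1,p-c+1\},\min\{p,m-c\}]$ the cycle $C:=x_kx_{k+1}\cdots x_{k+c}x_k$ is a $(c+1)$-cycle of $D$ containing $x_p$ and $x_{p+1}$ with $x_{p+1}$ the successor of $x_p$ on $C$ (a good $k$ exists because $c\ge 8$ and $m\ge c+3$; the few degenerate positions of $x_p$, or the exceptional ``consecutive'' placement of the doubled residue class, are handled by a reversed or ad hoc choice of $C$). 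Since $x_p\to x\to x_{p+1}$, one may insert $x$ into $C$ and obtain a $(c+2)$-cycle, contradicting Remark~\ref{fact1}; hence $q=p+2$.

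Next, with $q=p+2$, the sequence $P':=x_1\cdots x_p\,x\,x_{p+2}\cdots x_m$ is again a path of $D$ from $x_1$ to $x_m$, now of length exactly $m-1=diam(D)$. Running on $P'$ the same argument that gave $D[P]\cong Q_m$ yields $D[P']\cong Q_m$ with initial vertex $x_1$ and terminal vertex $x_m$. Since $D[P]$ and $D[P']$ carry the same vertex in every position but position $p+1$ (which is $x_{p+1}$ in $P$ and $x$ in $P'$), comparing the two copies of the $Q_m$-structure — in particular using that positions differing by $c+1$ lie in the same partite set — shows that, away from a bounded window near each end of $P$, the vertex $x$ must lie in the same partite set (call it $V_x$) as $x_{p+1}$ and have the same in-neighbours and out-neighbours on $P$ as $x_{p+1}$. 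When $p+1$ lies in that bounded near-the-end window, which can occur only if $m\le 2c+1$, the finitely many possibilities are checked directly and turn out to be among those listed; so we may assume the twin property holds.

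It then remains to locate $p+1$. From the twin property one reads off the neighbourhood of $x$ on $P$: $x\to x_{p+2}$ and $x\to x_j$ for $1\le j\le p-1$ with $x_j\notin V_x$, while $x_p\to x$ and $x_j\to x$ for $p+3\le j\le m$ with $x_j\notin V_x$. If $2\le p\le m-3$, one can pick $k\in[\max\{1,p+3-c\},\min\{p-1,m-c\}]$ making $C_k:=x_kx_{k+1}\cdots x_{k+c}x_k$ a valid $(c+1)$-cycle with $x_{k+c}\to x$ and $x\to x_k$, so inserting $x$ into $C_k$ at the arc $x_{k+c}\to x_k$ produces a $(c+2)$-cycle — a contradiction, unless $V_x$ contains $x_k$ or $x_{k+c}$ for every admissible $k$. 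A residue computation modulo $c+1$ (using that exactly one partite set, say $V_t$, recurs on every window of $c+1$ consecutive positions of $P$) shows this can happen only when $p+1\in\{3,4\}$ with $V_t$ placed so that $x_{p+1}\in V_1$, or $p+1\in\{m-3,m-2\}$ with $V_t$ placed so that $x_{p+1}\in V_m$; together with the boundary values $p+1=2$ ($p=1$, where the range for $k$ is empty) and $p+1=m-1$ ($p=m-2$, symmetric under arc reversal), this gives exactly the list in the statement, whence $x\in\{V_1,V_2,V_{m-1},V_m\}$.

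I expect the main obstacle to be the bookkeeping in this last step: choosing the right $(c+1)$-cycle for each placement of $p$, and in particular the sub-case $V_x=V_t$ (the doubled class), where one must use a further cycle to exclude the doubled partite set from the interior of $P$ — precisely what forces $p+1$ into the four boundary values $3,4,m-3,m-2$ with $V_t$ at an end. Checking that the surviving boundary configurations (and the near-the-end window of Step two) genuinely contain no $(c+2)$-cycle is where the care lies.
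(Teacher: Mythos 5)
Your first two steps are sound and in fact take a cleaner route than the paper: the diameter bound $m+1-(q-p)\ge m-1$ forcing $q-p\le 2$, and especially the observation that $P'=x_1\cdots x_p\,x\,x_{p+2}\cdots x_m$ is again a shortest $x_1$--$x_m$ path of length $diam(D)$, so that $D[P']\cong Q_m$ and $x$ is automatically a ``twin'' of $x_{p+1}$. The paper instead derives the twin property last, after locating $l$ by exhibiting explicit cycles in $D[P\cup\{x\}]$. (Two smaller caveats: ruling out $q=p+1$ needs a $(c+1)$-cycle through the arc $x_px_{p+1}$, and for $p=1$ or $p=m-1$ the only candidate window $x_k\cdots x_{k+c}x_k$ can fail to be a cycle when the doubled residue class is $\{1,c+1\}$; and the ``bounded window'' caveat in your second step is unnecessary, since the $Q_m$-structure of $P'$ is fully determined.)

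The genuine gap is in your third step. The only cycles you use to locate $p+1$ are $x_kPx_{k+c}\,x\,x_k$, which require $x\to x_k$ and $x_{k+c}\to x$, i.e.\ $k\le p-1$, $k\ge p+3-c$, and $x_k,x_{k+c}\notin V_x$. You assert that a residue computation shows the obstruction ``$x_k\in V_x$ or $x_{k+c}\in V_x$ for every admissible $k$'' forces exactly the configurations in the statement. That is false. Take $l=p+1=3$: the admissible range is $k=1$ only, and $k=1$ is obstructed not only when $x_1\in V_x$ (i.e.\ $x_3\in V_1$, the listed exception) but also when $x_{c+1}\in V_x$, i.e.\ when the doubled residue pair is $\{s,t\}=\{3,c+1\}$ so that $x_3$ lies in the partite set of $x_{c+1}$ rather than of $x_1$. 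Your family of cycles cannot exclude this configuration (indeed, no $(c+2)$-cycle through $x$ of the form $x\,x_aPx_b\,x$ exists there, since every out-neighbour of $x$ on $P$ is $x_1$ or $x_4$ and the vertex $c+1$ or $c$ steps further along $P$ lands back in $V_x$), yet the claim asserts it cannot occur. Excluding it requires $(c+2)$-cycles of a different shape, built from the backward arcs of the $Q_m$-structure (two or more back-jumps, as in the paper's constructions $x_{l-1}xx_{l+1}Px_{c+l-1}x_lx_{l-2}x_{l-1}$ etc.), and carrying out that case analysis for each of $l\in\{3,4,m-3,m-2\}$ and for $5\le l\le m-4$ is precisely the content of the claim, not residual bookkeeping. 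As it stands, your argument proves a weaker statement with a longer list of exceptional configurations.
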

\begin{proof}
Since $D$ has no $(c+2)$-cycle, there is an integer $l$ such that $x_{l-1} \rightarrow x \rightarrow x_{l+1}$ and $x$ is in the same partite set with $x_l$. If $3 \leq l \leq m-2$, it easy to check that $D[P\cup \{x\}]$ contains a $(c+2)$-cycle $C$ as follows:\\
(1) when $m \geq c+l-1$,
\begin{itemize}
  \item $C=x_{l-1}xx_{l+1}Px_{c+l-2}x_lx_{l-2}x_{l-1}$, or
  \item $C=x_{l-1}xx_{l+1}Px_{c+l-3}x_lx_{l-3}x_{l-2}x_{l-1}$ unless $x_3 \in V_1$ and $l=3$, or
  \item $C=x_{l-1}xx_{l+1}Px_{c+l-4}x_lx_{l-4}Px_{l-1}$ unless  $l=3$, or $ x_4 \in V_1$ and $l=4$;
\end{itemize}
%
%
or (2) when $l \geq c-1$,
\begin{itemize}
  \item $C=x_{l-1}xx_{l+1}x_{l+2}x_lx_{l+2-c}Px_{l-1}$, or
  \item $C=x_{l-1}xx_{l+1}x_{l+2}x_{l+3}x_lx_{l+3-c}Px_{l-1}$ unless $x_{m-2} \in V_m$ and $l=m-2$, or
  \item $C=x_{l-1}xx_{l+1}Px_{l+4}x_lx_{l+4-c}Px_{l-1}$ unless $l=m-2$, or $ x_{m-3} \in V_m$ and $l=m-3$.
\end{itemize}
%
%
Then we consider $m < c+l-1$ and $l < c-1$. Recall that $m \geq c+3$. This implies that $l \geq 4$. Clearly, if $x_lx_1, x_{c+1}x_l \in A(D)$, there is a $(c+2)$-cycle $x_1Px_{l-1}xx_{l+1}Px_{c+1}x_lx_1$. If $x_l, x_{c+1} \in V_l$, then $D[P\cup \{x\}]$ contains a $(c+2)$-cycle $x_2Px_{l-1}xx_{l+1}Px_{c+2}x_lx_2$. Thus $x_l \in V_1$. Observe that $D[P\cup \{x\}]$ contains $x_3Px_{l-1}xx_{l+1}Px_{c+3}x_lx_3$ which is a $(c+2)$-cycle unless $l=4$. Thus $x$ belongs to $V_1,\ V_2,\ V_{m-1}$ or $V_m$. We also get $l\in \{2,3,4,m-3,m-2,m-1\}$ and $x_3 \in V_1$ when $l=3$; $x_4 \in V_1$ when $l=4$; $x_{m-2} \in V_m$ when $l=m-2$; and $ x_{m-3} \in V_m$ when $l=m-3$.

In all cases, it is easy to check that $x$ and $x_l$ have the same in-neighbours and out-neighbours on $P$, otherwise $D[P\cup \{x\}]$ contains a cycle of length at most $(c+2)$ and two pairs of vertices which belong to the same partite set. 
By Theorem \ref{guo1}, we get a contradiction.\end{proof}

\bigskip

\begin{figure}[h]
  \begin{center}
  \includegraphics[width=0.7\textwidth]{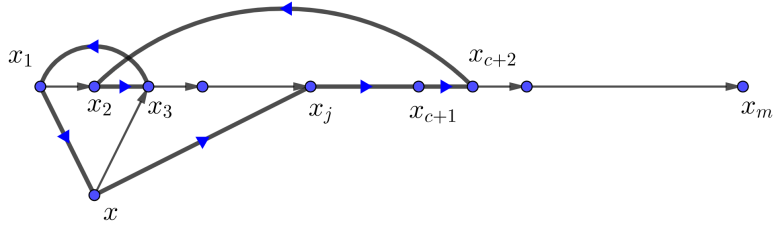}
  \caption{A cycle of length at most $(c+2)$ in $D[P\cup \{x\}]$ which contains two vertices in $V_1$ and two vertices in $V_2$.}
  \end{center}
  \label{example}
\end{figure}


\begin{claim} \label{clm3} Suppose that $x \in N^+(P) \cap N^-(P)$. If all vertices $x_p$ and $x_q$ with $x_p \rightarrow x \rightarrow x_q$ satisfy $p >q$, then\\
(i) $x$ has the same in-neighbours and out-neighbours on $P$ as $x_1$ or $x_m$; or\\
(ii) $D[P\cup\{x\}]$ has four specific structures as described in Fig. \ref{clm3-1}; or\\
(iii) $x\rightarrow x_1$, $x_2Px_m \Rightarrow x$ and $x \in V_{c+1}$; or\\
(iv) $x_m\rightarrow x$, $x \Rightarrow x_1Px_{m-1}$ and $x \in V_{m-c}$. \end{claim}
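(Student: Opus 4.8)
The hypothesis is that $x\in N^+(P)\cap N^-(P)$ but \emph{no} pair $x_p\to x\to x_q$ has $p<q$; equivalently, whenever $x_p\to x$ and $x\to x_q$ we have $p>q$. Since $x\in N^+(P)\cap N^-(P)$, both an out-neighbour and an in-neighbour of $x$ exist on $P$, so there is a well-defined ``cut'': let $r$ be the largest index with $x\to x_r$ and $s$ the smallest index with $x_s\to x$; the hypothesis forces $r<s$, and in fact (using that $D[P]\cong Q_m$, so consecutive vertices $x_j,x_{j+1}$ are adjacent) one sees $x\to x_j$ for all $j\le r$ that are adjacent to $x$, and $x_j\to x$ for all $j\ge s$ adjacent to $x$, with $x$ nonadjacent to $x_j$ exactly on the partite class it shares. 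So morally $x$ behaves like an ``early'' vertex feeding the front of $P$ and being fed by the back of $P$. The plan is to pin down exactly how early/late the transition can be, which is where the short-cycle machinery of Theorem \ref{guo1} (together with Claim \ref{clmc+2}'s diameter bound and the fact that $P$ realizes the diameter) does the work.

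\textbf{Step 1: locate $x$ in one of at most two partite classes near the ends.} Suppose $x\to x_r$ with $r$ maximal and $x_s\to x$ with $s$ minimal, $r<s$. If $r$ is ``not too small'' and $s$ is ``not too large'', I would build a cycle through $x$ that uses the back portion $x_sPx_m$-then-reset and the front portion $x_1Px_r$ glued by the arcs $x_s\to x$ and $x\to x_r$, together with one of the chord arcs of $Q_m$, to produce a cycle on between $3$ and $c+2$ arcs that visits some partite class twice in a way incompatible with $D$ having no $(c+2)$-cycle (apply Theorem \ref{guo1} to a short cycle missing a partite class). Pushing this as far as it will go should force $r\le 2$ or $s\ge m-1$, hence (since $x$ is nonadjacent precisely to its own class, and $x_1\in V_1$, $x_m\in V_m$) that $x$ lies in $V_1$, $V_2$, $V_{c+1}$, $V_m$, or $V_{m-c}$; the classes $V_{c+1}$ and $V_{m-c}$ are the genuinely new ones relative to Claim \ref{clm2}, arising because $x$ may fail to be insertable at the seam of $P$.

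\textbf{Step 2: split into the four conclusions.} Once $x$ is confined to those classes, I would run a case analysis. If $x\in V_1$ or $V_m$ and $x$ is adjacent to everything it should be, the insertion-type argument (as in Claim \ref{clm2} and Lemma \ref{lem+-}, forbidding $(c+2)$-cycles) forces $x$ to copy the in/out-neighbourhoods of $x_1$ or $x_m$ on $P$ — conclusion (i). The degenerate possibility is that $x\to x_1$ with $x_2Px_m\Rightarrow x$: then $x$ is not an in/out-copy of $x_1$ because it has no out-neighbour among $x_2,\dots,x_m$ beyond $x_1$; here counting adjacencies ($x$ is nonadjacent only to its own class, and must be nonadjacent to $x_1$'s... no — $x\to x_1$, so $x\notin V_1$) shows $x\in V_{c+1}$ — conclusion (iii); symmetrically $x_m\to x$ with $x\Rightarrow x_1Px_{m-1}$ gives $x\in V_{m-c}$ — conclusion (iv). The remaining borderline configurations — $x\in V_2$ feeding $x_1$ but just barely, or $x\in V_{m-1}$ — are where neither (i), (iii), nor (iv) holds; these I would collect into the finite list of four pictures of $D[P\cup\{x\}]$ referenced as Figure \ref{clm3-1}, verifying in each that no $(c+2)$-cycle arises and that the configuration is not isomorphic to a previously-handled one. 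The main obstacle is Step 1: the cycle constructions gluing front/back of $P$ through $x$ are fiddly, one must track the single doubled partite class of $Q_m$ carefully so that the resulting short cycle really does miss a class (so Theorem \ref{guo1} applies), and the boundary cases $r\in\{1,2\}$, $s\in\{m-1,m\}$ must be treated by hand rather than by the generic construction — exactly the cases that spawn (ii), (iii), (iv).
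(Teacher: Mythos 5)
Your overall orientation is right---$x$'s out-neighbours all precede its in-neighbours on $P$, so one should locate the transition and use Theorem \ref{guo1} on short cycles through $x$---but the proposal defers exactly the steps that constitute the proof, and the one concrete intermediate claim you do commit to is false. You assert that Step 1 ``should force $r\le 2$ or $s\ge m-1$.'' It does not: in the exceptional configurations that become conclusion (ii) (Types I and II of Figure \ref{clm3-1}), the last out-neighbour of $x$ can be $x_4$ and the first in-neighbour $x_5$ (resp.\ the mirror image near $x_m$). What the cycle constructions actually force is not a bound on $r,s$ alone but a collapse of $m$ itself: in the case where $x$ has at least two in-neighbours and two out-neighbours, one first shows the transition window is short (the paper proves $p_2-q_2<5$ by exhibiting a $7$-cycle meeting only five partite sets, then invoking Theorem \ref{guo1}), then builds the wrap-around cycles $xx_qPx_{c+q}x$ (when $m\ge c+q$) and $xx_{q-c+1}Px_{p_1}x$ (when $q\ge c$); their only failure modes are that the endpoint lands in $x$'s own partite class, and chasing those failures pins $x$ to $V_1$ or the class of $x_{c+1}$, forces coincidences such as $x_3\in V_1$, and drives $m$ down to $c+3$ or $c+4$. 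Without deriving that $m$ collapses, there is no reason conclusion (ii) should be a \emph{finite} list of four pictures rather than an infinite family, so your Step 2 cannot get off the ground.

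A second structural gap: the dichotomy the argument actually turns on is ``$x$ has at least two in-neighbours and two out-neighbours on $P$'' (which, after the analysis above, yields exactly conclusion (ii)) versus ``$x$ has a unique in-neighbour or a unique out-neighbour'' (which immediately yields (i), (iii) or (iv), since a unique out-neighbour must be $x_1$ --- every earlier vertex of $x_1Px_q$ is dominated by $x$ or non-adjacent --- giving either an in/out-copy of $x_1$ or the configuration $x\to x_1$, $x_2Px_m\Rightarrow x$, $x\in V_{c+1}$, and symmetrically at $x_m$). Your split by partite class conflates these two regimes, so even granting your Step 1 you have not shown why a vertex with many neighbours on both sides and $m$ large cannot occur outside your list. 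As written, the proposal is a plan whose load-bearing constructions are labelled ``I would build'' and ``fiddly''; the claim is precisely those constructions.
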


\begin{proof}
Note that if some vertex $x_q \rightarrow x$ (or $x \rightarrow x_q$) then $ x_qPx_m \Rightarrow x$ (or $x \Rightarrow x_1Px_q$). Let $q$ be the maximum integer such that $x \rightarrow x_q$.

First we suppose that $x$ has at least two in-neighbours and two out-neighbours on $P$. Let $x_{q_2}$ be the previous out-neighbour of $x$ before $x_q$ on $P$ and let $x_{p_1},x_{p_2}$ be two in-neighbours of $x$ on $P$ which is nearest to $x_q$. Clearly, we have $p_2-q_2<5$. Otherwise, $xx_{q_2}Px_{p_2}x$ is a 7-cycle meeting five partite sets of $D$, a contradiction by Theorem \ref{guo1}. Hence there exists at most one vertex in $x_{q_2}Px_{p_2}$ such that it is non-adjacent to $x$. Let $x_l$ be such vertex if it exists. According to the position of $x_l$, there are four possible sequences of $x_{q_2}Px_{p_2}$: (1) $x_{q_2}x_qx_{p_1}x_{p_2}$, (2) $x_{q_2}x_lx_qx_{p_1}x_{p_2}$, (3) $x_{q_2}x_qx_lx_{p_1}x_{p_2}$ and (4) $x_{q_2}x_qx_{p_1}x_lx_{p_2}$.

If $m \geq c+q$, then there is the $(c+2)$-cycle $xx_qPx_{c+q}x$ for sequences (1), (3) and (4) and the $(c+2)$-cycle $xx_{q_2}Px_{q_2+c}x$ for sequence (2) unless $x$ and $x_{q+c-2}$ are in the same partite set. Observe that for sequence (2) if $q \geq 5$, then there still exists a $(c+2)$-cycle $xx_{q-5}P x_{q-5+c}x$ via $x_{q+c-2}x \notin A(D)$. If $q \geq c$, then $xx_{q-c+1}Px_{p_1}x$ is a $(c+2)$-cycle for sequences (1)-(3) and $xx_{q-c+3}Px_{p_2}x$ is a $(c+2)$-cycle for sequence (4) unless $x$ and $x_{q-c+3}$ belong to the same partite set. We also note that for sequence (4) if $q \geq m-5$ and $q \geq c$, then there still exists a $(c+2)$-cycle $xx_{q-c+5}P x_{q+5}x$. Hence, $m < c+q$ and $q < c$ or $P$ meets the partite sets of $D$ along two special orders as described in Fig. \ref{clm3-1}. Moreover, $x$ has exactly two in-neighbours or two out-neighbours and $x \in \{V_1,V_2,V_m,V_{m-1}\}$.
\bigskip
\begin{figure}[h]
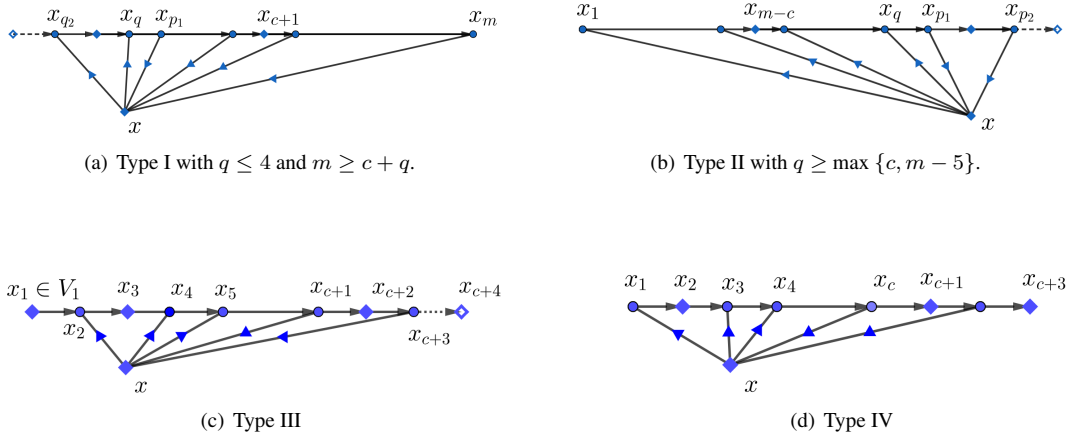

  \centering
  \subfigure[Type I with $q \leq 4$ and $m \geq c+q$.]{
  \includegraphics[width=0.45\textwidth]{type1}}
\qquad
  \subfigure[Type II with $q \geq$ max $\{c,m-5\}$.]{
  \includegraphics[width=0.45\textwidth]{type2}}
  \qquad
\bigskip
\bigskip

  \subfigure[Type III]{
  \includegraphics[width=0.45\textwidth]{Pc+4}}
\qquad
\qquad
  \subfigure[Type IV]{
  \includegraphics[width=0.4\textwidth]{Pc+3}}
  \caption{The structure of $D[P\cup \{x\}]$ of Type I -- Type IV in Claim \ref{clm3}.}
  \label{clm3-1}
\end{figure}

\bigskip

Clearly, $x \in V_1$ or $x \in V_{c+1}$. Otherwise, there is a $(c+2)$-cycle $xx_1Px_{c+1}x$ in $D$. Suppose that $x \in V_1$. If $x_3 \notin V_1$, then $xx_3Px_{c+3}x$ is an $(c+2)$-cycle because $x$ has at least two out-neighbours. Then $x_3 \in V_1$ and $x \rightarrow x_4$. Note that $x_4$ and $x_{c+5}$ belong to the same partite set. If $x_5\rightarrow x$, then $q=4$, $m\geq c+4$ and $P$ is isomorphic to Type I in Fig. \ref{clm3-1}. If $x\rightarrow x_5$, we obtain a $(c+2)$-cycle $xx_5Px_{c+5}x$ when $m \geq c+5$. Hence $m=c+3$ or $m=c+4$ and $P=x_1x_2\cdots x_{c+3}(x_{c+4})$ where $x_3,x_{c+2} ( x_{c+4}) \in V_1$. Next, suppose that $x \in V_{c+1}$. Obviously, there is a $(c+2)$-cycle $xx_2Px_{c+2}x$ when $x \notin V_2$. Similarly, when $x_4 \rightarrow x$ we have $q=3$ and $m=c+2$, which is impossible. Then $x \rightarrow x_4$. We obtain a $(c+2)$-cycle $xx_4Px_{c+4}x$ when $m \geq c+4$. Hence $m=n+3$ and $P=x_1x_2\cdots x_{c+3}$ where $x_{c+1} \in V_2$. In a word, when $x$ has at least two in-neighbours and two out-neighbours, $P$ has four specific structures as described in Fig. \ref{clm3-1} based on the partite set which $x$ belongs to.

Second, we suppose that $x$ has either one in-neighbour or one out-neighbour. Then (i) $x \in V_1$ or $x \in V_m$ and $x$ has the same in-neighbours and out-neighbours on $P$ as $x_1$ or $x_m$; or (ii) $x\rightarrow x_1$, $x_2Px_m \Rightarrow x$ and $x \in V_{c+1}$; or (iii) $x_m\rightarrow x$, $x \Rightarrow x_1Px_{m-1}$ and $x \in V_{m-c}$.
\end{proof}

By Claim \ref{clm2} and Claim \ref{clm3}, we get the following.

\begin{Proposition}\label{property1} If $x \in N^+(P) \cap N^-(P)$, then $x$ and $P$ satisfy one of the following statements.

(i) $x$ and one of $\{x_1,x_2,x_{m-1},x_m\}$ belong to the same partite set and their in-neighbours and out-neighbours on $P$ are same;

(ii) $x$ and $x_l \in \{x_3,x_4,x_{m-3},x_{m-2}\}$ belong to the same partite set and their in-neighbours and out-neighbours on $P$ are same, where $x_3 \in V_1$ when $l=3$; $x_4 \in V_1$ when $l=4$; $x_{m-2} \in V_m$ when $l=m-2$; and $ x_{m-3} \in V_m$ when $l=m-2$;

(iii) $D[P \cup x]$ has four specific structures Type(I--IV) which are shown in Fig. \ref{clm3-1};

(iv) $x\rightarrow x_1$, $x_2Px_m \Rightarrow x$ and $x \in V_{c+1}$ or $x_m\rightarrow x$, $x \Rightarrow x_1Px_{m-1}$ and $x \in V_{m-c}$.
\end{Proposition}

\medskip

\begin{figure}[h]
  \centering
  \includegraphics[width=0.9\textwidth]{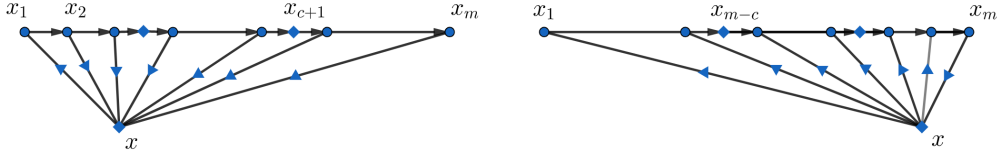}
  \caption{The structure of $D[P\cup \{x\}]$ of Proposition \ref{property1}(iv).}
  \label{clm3-3}
\end{figure}

\medskip

Next, suppose that $x$ has only in-neighbours in $V(P)$, i.e., $P \Rightarrow x$. Since $D$ is strong, there is a path from $x$ to $P$. Let $P^\prime =x \cdots x^\prime x^{\prime\prime}$ be a shortest path such that $x^{\prime\prime} \in N^-(P)$. If $N^-(x^{\prime\prime}) \cap V(P) = \emptyset$, then there is an integer $j \leq 4$ such that $x_{j+c-1} \rightarrow x^\prime$ and further $D$ contains a $(c+2)$-cycle $x^\prime x^{\prime\prime} x_jPx_{j+c-1}x^\prime$, a contradiction. Then $x^{\prime\prime} \in N^-(P)\cap N^+(P)$. By Proposition \ref{property1}, there are several all possible structures of $D[x^{\prime\prime}\cup P]$. In each case, we obtain a $(c+2)$-cycle or $diam(D) \geq m$, which contradicts the initial assumption or Claim \ref{clmc+2}.

\begin{enumerate}[\text{Case} 1:]
  \item $x^{\prime\prime}$ satisfies Proposition \ref{property1} (i) and $x_l\in \{x_2,x_{m-1},x_m\}$. It is easy to check that $D$ contains a $(c+2)$-cycle.

  \item $x^{\prime\prime}$ satisfies Proposition \ref{property1} (ii). There exists a $(c+2)$-cycle $x_2x_3x^\prime x^{\prime\prime} $ $x_4 P x_cx_2$ when $l=3$ (or $x_2x_3x_4x^\prime x^{\prime\prime} x_5Px_cx_2$ when $l=4$, resp.). For the case $l=m-2$ and $l=m-3$, we can obtain a $(c+2)$-cycle similarly.

  \item $x^{\prime\prime}$ satisfies Proposition \ref{property1} (iii). $D[P\cup \{x^\prime,x^{\prime\prime}\}]$ contains a $(c+2)$-cycle $x_1x_2x^\prime x^{\prime\prime}x_3Px_cx_1$ (or $x_1x_2x_3x^\prime x^{\prime\prime}x_4Px_cx_1$) for Type I, III. For Type II, there is a $(c+2)$-cycle $x_{m-1}x^{\prime}x^{\prime\prime}x_{m-c}Px_{m-1}$ unless there is no arc between $x^{\prime\prime}$ and $x_{m-c}$. Moreover $D$ contains $x_mx^{\prime}x^{\prime\prime}x_{m-c+1}Px_m$ unless there is no arc between $x^{\prime}$ and $x_m$. Then $x_{m-2}x^{\prime}x^{\prime\prime}x_{m-c-1}Px_{m-2}$ is a cycle when $x^{\prime\prime} \nrightarrow x_{m-c}$ and $x_m \nrightarrow x^\prime$.

  \item $x^{\prime\prime}$ satisfies Proposition \ref{property1} (iv) and $x_m \rightarrow x^{\prime\prime}$. There is a $(c+2)$-cycle \begin{math}x_{m-c} x^\prime x^{\prime\prime} x_{m-1}x_{m-c}\end{math} unless $x_{m-c}$, $x^{\prime\prime}$ and $x_{m-1}$ belong to the same partite set. Then $D[P\cup \{x^\prime,x^{\prime\prime}\}]$ contains a $(c+2)$-cycle $x_{m-c} x^\prime x^{\prime\prime} x_{m-c+2} P x_{m} x_{m-c}$.

  \item $x^{\prime\prime}$ satisfies Proposition \ref{property1} (i) $x_l\in \{x_2,x_{m-1},x_m\}$ or (iv) $x^{\prime\prime} \rightarrow x_1$. According to the analysis of Cases 1 -- 4, we get $dist(x^\prime,x_m) \geq m$, a contradiction.
\end{enumerate}

Hence, it is impossible that $x$ has only in-neighbours on $P$. Analogously, we can show that $D-P$ does not have any vertex which only has out-neighbours on $P$.

Since each partite set of $D$ has at least two vertices, $P$ is not of Type III or and Type IV $m\geq 2c+1$. In the following, we show that no vertex out of $P$ satisfies (iv). Assume that there is a vertex $x$ satisfying (iv) and $x\rightarrow x_1$, $x_2Px_m \Rightarrow x$. If there is a vertex $y$ out of $P$ such that $ x \rightarrow y$, it is easy to obtain that $y$ and $x_1$ have the same in-neighbours and out-neighbours on $P$; or $y$ satisfies (iv) and $x_m\rightarrow y$, $y \Rightarrow x_1Px_{m-1}$; or $D[P \cup y]$ is of Type II. Thus $x_cxyx_{c+1}x_2Px_c$ is a $(c+2)$-cycle unless $x_{c+1} \nrightarrow x_2$. However, we obtain that $D$ contains $x_3xyx_4Px_cx_1x_2x_3$ or $x_3xyx_5Px_{c+1}x_1x_2x_3$. Thus $y$ and $x_1$ have the same adjacency to $P$. This implies that $dist(x,x_m)=m$, a contradiction. Analogously, if there is a vertex $x$ satisfying (iv) and $x_m\rightarrow x$, $x \Rightarrow x_1Px_{m-1}$, we will get $dist(x_1,x)=m$, a contradiction. Hence no vertex out of $P$ satisfies (iv). Finally, if there exist vertices $x$ of Type I and $y$ of Type II such that $x\rightarrow y$, then $D$ contains a $(c+2)$-cycle $x_1Px_4xyx_5Px_cx_1$ or $x_1Px_4x_6Px_{c+1}x_1$, a contradiction. Thus for any vertex $x$ of Type I and any vertex $y$ of Type II, there is no arc between $x$ and $y$ or $y \rightarrow x$. Observe that $D$ is isomorphic to a member of $\mathcal{Q}_m$. This proves Theorem \ref{main}.$\Box$

\section{Data Availability Statement}
No data were generated or used during the study.

\nocite{*}
\bibliographystyle{abbrvnat}
\bibliography{cite}

\begin{thebibliography}{10}
\providecommand{\natexlab}[1]{#1}
\providecommand{\url}[1]{\texttt{#1}}
\expandafter\ifx\csname urlstyle\endcsname\relax
  \providecommand{\doi}[1]{doi: #1}\else
  \providecommand{\doi}{doi: \begingroup \urlstyle{rm}\Url}\fi

\bibitem[Balakrishnan and Paulraja(1984)]{Balakrishnan}
R.~Balakrishnan and P.~Paulraja.
\newblock Note on the existence of directed {$(k+1)$}-cycles in diconnected
  complete {$k$}-partite digraphs.
\newblock \emph{J. Graph Theory}, 8\penalty0 (3):\penalty0 423--426, 1984.

\bibitem[Bang-Jensen and Gutin(1998)]{Bang-Jensen3}
J.~Bang-Jensen and G.~Gutin.
\newblock Generalizations of tournaments: a survey.
\newblock \emph{J. Graph Theory}, 28\penalty0 (4):\penalty0 171--202, 1998.

\bibitem[Bang-Jensen and Gutin(2001)]{Bang-Jensen}
J.~Bang-Jensen and G.~Gutin.
\newblock \emph{Digraphs: {T}heory, algorithms and applications}.
\newblock Springer Monographs in Mathematics. Springer-Verlag London, Ltd.,
  London, 2001.

\bibitem[Bang-Jensen and Gutin(2018)]{Bang-Jensen2}
J.~Bang-Jensen and G.~Gutin.
\newblock \emph{Classes of directed graphs}.
\newblock Springer Monographs in Mathematics. Springer, Cham, 2018.

\bibitem[Beineke and Reid(1978)]{Beineke}
L.~W. Beineke and K.~B. Reid.
\newblock \emph{Tournaments, Selected topics in graph theory}.
\newblock Academic Press, Inc., London-New York, 1978.

\bibitem[Bondy(1976)]{bondy}
J.~A. Bondy.
\newblock Diconnected orientations and a conjecture of {L}as {V}ergnas.
\newblock \emph{J. London Math. Soc. (2)}, 14\penalty0 (2):\penalty0 277--282,
  1976.

\bibitem[Guo and Volkmann(1996)]{guo}
Y.~Guo and L.~Volkmann.
\newblock A complete solution of a problem of {B}ondy concerning multipartite
  tournaments.
\newblock \emph{J. Combin. Theory Ser. B}, 66\penalty0 (1):\penalty0 140--145,
  1996.

\bibitem[Gutin(1982)]{gutin1}
G.~Gutin.
\newblock On cycles in complete {$n$}-partite digraphs.
\newblock \emph{Depon. in VINITI, No. 2473, Gomel Politechnic Institute}, 1982.

\bibitem[Gutin(1984)]{gutin2}
G.~Gutin.
\newblock Cycles in strong {$n$}-partite tournaments.
\newblock \emph{Vests\={\i} Akad. Navuk BSSR Ser. F\={\i}z.-Mat. Navuk},
  \penalty0 (5):\penalty0 105--106, 1984.

\bibitem[Volkmann(2007)]{Volkmann}
L.~Volkmann.
\newblock Multipartite tournaments: a survey.
\newblock \emph{Discrete Math.}, 307\penalty0 (24):\penalty0 3097--3129, 2007.

\end{thebibliography}

\end{document}